\newtheorem{thm}{Theorem}[section]
\newtheorem{cor}[thm]{Corollary}
\newtheorem{pro}[thm]{Proposition}
\newtheorem{prob}[thm]{Problem}
\newtheorem{df}[thm]{Definition}
\newtheorem{claim}{Claim}
\def\N{\mathbb N}
\def\R{\mathbb R}
\def\supp{{\mathrm {supp}}\,}
\begin{document}

\title[Linear and uniformly continuous surjections]{Linear and uniformly continuous surjections between $C_p$-spaces over metrizable spaces}
\author[A. Eysen, A. Leiderman and V. Valov]
{Al\.{i} Emre Eysen, Arkady Leiderman and Vesko Valov}
\address{Department of Mathematics, Faculty of Science, Trakya University, Edirne, Turkey}
\email{aemreeysen@trakya.edu.tr}
\thanks{The first author was partially supported by TUBITAK-2219}

\address{Department of Mathematics, Ben-Gurion University of the Negev,
Beer-Sheva, Israel}
\email{arkady@math.bgu.ac.il}

\address{Department of Computer Science and Mathematics, Nipissing University,
100 College Drive, P.O. Box 5002, North Bay, ON, P1B 8L7, Canada}
\email{veskov@nipissingu.ca}
\thanks{The third author was partially supported by NSERC Grant 261914-19}

\keywords{$C_p(X)$-space, zero-dimensional space, strongly countable-dimensional space, scattered space, uniformly continuous surjection}
\subjclass[2010]{Primary 54C35; Secondary 54F45}


\begin{abstract}
For any Tychonoff space $X$ let $D(X)$ be either the set $C(X)$ of all continuous functions on $X$ or the set $C^*(X)$ of all bounded continuous functions on $X$. When $D(X)$ is endowed with the point convergence topology, we write $D_p(X)$. 

 Let $T: D_{p}(X) \to D_{p}(Y)$ be a continuous linear surjection, where $X$ is a metrizable space and $Y$ is perfectly normal. 
We show that if $X$ has some dimensional-like property $\mathcal P$, then so does $Y$. For example, $\mathcal P$ could be one of the following properties: zero-dimensionality, countable-dimensionality or strong countable-dimensionality. This result remains true if $T$ is a uniformly continuous and inversely bounded surjection.

Also, we consider other properties $\mathcal P$: of being a scattered, or a strongly $\sigma$-scattered space, or being a $\Delta_1$-space, see \cite{KKL}.
Our results strengthen and extend several results from \cite{b3}, \cite{gkm} and \cite{KKL}.
\end{abstract}

\maketitle




\section{Introduction}\label{intro}
For a Tychonoff space $X$,
by $C(X)$ we denote the linear space of all continuous real-valued functions on $X$.
$C^*(X)$ is a subspace of $C(X)$ consisting of the bounded functions.
We write $C_p(X)$ (resp., $C_p^*(X)$) if $C(X)$ (resp., $C^*(X)$) is endowed with the pointwise convergence topology.
The questions concerning linear or uniform homeomorphisms of $C_p$-spaces have been intensively studied by many authors.
More information can be found in 
\cite{ar1}, \cite{mar}, \cite{vanMill}, \cite{tk}, \cite{tk2}.

Throughout the paper by dimension we mean the {\em covering dimension $\dim$}.
Recall that for a Tychonoff space $X$ and an integer $n\geq 0$, $\dim X \leq n$ if every finite functionally open cover of the space $X$
 has a finite functionally open refinement of order $\leq n$, see \cite{en}.

After the striking results of Pestov \cite{p} and Gul'ko \cite{gu} that $\dim X=\dim Y$ for any Tychonoff spaces $X$ and $Y$ 
provided $C_p(X)$ and $C_p(Y)$ are linearly homeomorphic or uniformly homeomorphic, Arhangel'skii posed a problem
whether $\dim Y\leq\dim X$ if there is continuous linear surjection from $C_p(X)$ onto $C_p(Y)$, see \cite{ar}.
This question was answered negatively by Leiderman-Levin-Pestov \cite{llp} and Leiderman-Morris-Pestov \cite{lmp}. 
For every finite-dimensional metrizable compact space $Y$ there exists a continuous linear surjection $T: C_p([0,1])\to C_p(Y)$ \cite{lmp}. Later, Levin \cite{Levin}
 showed that one can construct such a surjection which additionally is an open mapping. 

However, it turned out that the zero-dimensional case is an exception.  It was shown in \cite{llp} that if there is a linear continuous surjection $T: C_p(X)\to C_p(Y)$ for compact metrizable spaces $X$ and $Y$,
 then $\dim X=0$ implies that $\dim Y=0$. The last result was extended for arbitrary compact spaces by Kawamura-Leiderman \cite{KawL}
 who also raised the question whether the same statement is true without the assumption of compactness of $X$ and $Y$.
 Recently, this difficult question was answered positively in \cite{ev}.

Everywhere below, by $D(X)$ we denote either $C^*(X)$ or $C(X)$, and $D_p(X)$ stays for $D(X)$ endowed with the pointwise convergence topology.  
In the present paper we mainly focus on linear or uniformly continuous surjections $T: D_p(X)\to D_p(Y)$, where $X$ is a metrizable space, $Y$ is either metrizable or perfectly normal and $T$ satisfies some additional condition. 
Moreover, almost all results are true if we consider any one of the four possible cases: $D(X)$ is either $C(X)$ or $C^*(X)$ and $D(Y)$ is either $C(Y)$ or $C^*(Y)$. So, everywhere below, if not said otherwise, we assume that all four cases are considered.

\begin{itemize}
\item A map $T: D_p(X)\to D_p(Y)$ is called {\em uniformly continuous} if for every neighborhood $U$ of the zero function in $D_p(Y)$ there is a neighborhood $V$ of the zero function in $D_p(X)$ such that $f,g\in D_p(X)$ and $f-g\in V$ implies $T(f)-T(g)\in U$.
\item For every bounded function $f\in C(X)$ by $||f||$  we denote its {\em supremum}-norm.
A map $T: D(X)\to D(Y)$ is called {\em $c$-good} (see \cite{gf}, \cite{gkm}) if for every $g\in C^*(Y)$ there exists  $f\in C^*(X)$ 
such that $T(f)=g$ and $||f||\leq c ||g||$.
\end{itemize}

$\N$ denotes the set of natural numbers $\{1, 2, \dots\}$.
We say that a sequence $\{g_n: n\in\N\}\subset C^*(Y)$ is {\em norm bounded} if there is $M > 0$ such that $||g_n||\leq M$ for all $n\in \N$.

\begin{df}\label{df1}
A map $T: D(X)\to D(Y)$ is called inversely bounded if for every norm bounded sequence
 $\{g_n\}\subset C^*(Y)$ there is a norm bounded sequence $\{f_n\}\subset C^*(X)$ with $T(f_n)=g_n$ for each $n\in\N$. 
\end{df}

 Evidently, every linear continuous map between $D_p(X)$ and $D_p(Y)$ is uniformly continuous
and every $c$-good map is inversely bounded.  Also, every linear continuous surjection
$T: C_p^*(X)\to C_p^*(Y)$, where $X$ and $Y$ are arbitrary Tychonoff spaces, is inversely bounded, see \cite[Proposition 3.3]{ev}.
\newpage
Recall that a normal topological space $X$ is {\em countable-dimensional} ({\em strongly countable-dimensional})
if $X$ can be represented as a countable union of normal finite-dimensional subspaces (resp., closed finite-dimensional subspaces).
Note that there are countable-dimensional compact metrizable spaces which are not strongly countable-dimensional, see \cite{en}.

Marciszewski \cite[Corollary 2.7]{mar} observed that, by modifying the Gulko's arguments \cite{gu} (c.f. \cite{mar2}, \cite{mp}), one can show
the following theorem:
\begin{thm} 
Let $\mathcal P$ be the property of metrizable spaces such that:
\begin{itemize}
\item [(i)] If $X\in \mathcal P$ and $Y$ is a subset of $X$ then $Y \in \mathcal P$,
\item [(ii)] If $X$ is a metrizable space which is a countable union of closed subsets $X_n\in \mathcal P$  then $X\in \mathcal P$.
\end{itemize}
Then, for metrizable spaces $X$ and $Y$ such that $C_p(X)$ and $C_p(Y)$ are uniformly homeomorphic,
$X\in \mathcal P$ if and only if $Y\in \mathcal P$.
\end{thm}

It is known that the covering dimension $\dim X \leq n$ satisfies the above conditions  (i) and (ii).
Much less is known about the case when $T: C_{p}(X) \to C_{p}(Y)$ is supposed to be only uniformly continuous and surjective. 
The following open problem has been posed in \cite[Question 4.1]{gkm}.

\begin{prob}\label{problem1}
Let $X$ be a compact metrizable strongly countable-dimensional {\em[}zero-dimensional{\em]} space.
Suppose that there exists a uniformly continuous surjection $T: C_{p}(X) \to C_{p}(Y)$.
Is $Y$ necessarily strongly countable-dimensional {\em[}zero-dimensional{\em]}?
\end{prob}

The authors of \cite{gkm} established that the answer to Problem \ref{problem1} is affirmative provided that
the uniformly continuous surjection $T$ is $c$-good for some $c > 0$. 
Later, in \cite{ev} the same was proved for $\sigma$-compact metrizable spaces.
In our paper  we strengthen this result by proving this remains true for all metrizable spaces and all inversely bounded 
uniformly continuous surjections.
In fact we develop a general scheme for the proof as follows. 

We consider the properties $\mathcal P$ of normal spaces such that:
\begin{itemize}
\item [(a)] if $X\in\mathcal P$ and $F\subset X$ is closed, then $F\in\mathcal P$;
\item [(b)] $\mathcal P$ is closed under finite products;
\item [(c)] if $X$ is a countable union of closed subsets each having the property $\mathcal P$, then $X\in\mathcal P$;
\item [(d)] if $f:X\to Y$ is a closed map with finite fibers, where $Y$ is a metrizable space with $Y\in\mathcal P$, then $X\in\mathcal P$.
\end{itemize}

From the classical results of dimension theory (see \cite{en}) it follows that {\em zero-dimensionality}, {\em countable-dimensionality} and {\em strongly countable-dimensionality} satisfy conditions $(a)-(d)$ above.

Now  we formulate one of the main results of our paper. 
\begin{thm}\label{theorem-main} Let $X$ be a metrizable space and $Y$ be perfectly normal.
Suppose that $T: D_p(X)\to D_p(Y)$ is a uniformly continuous inversely bounded surjection.
For any topological property $\mathcal P$ satisfying conditions $(a)-(d)$ above, if $X\in\mathcal P$ then $Y\in\mathcal P$.
\end{thm} 

\begin{cor}\label{cor-main} Let $X$, $Y$ and $T: D_p(X)\to D_p(Y)$ satisfy the hypotheses of Theorem $\ref{theorem-main}$.
\begin{itemize}
\item [(i)] If $X$ is either countable-dimensional or strongly countable-dimensional, then so is $Y$. 
\item [(ii)] If $X$ is zero-dimensional, then so is $Y$. 
\end{itemize}
\end{cor}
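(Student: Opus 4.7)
Corollary \ref{cor-main} is immediate from Theorem \ref{theorem-main} together with the classical facts that zero-dimensionality, countable-dimensionality, and strong countable-dimensionality of metrizable spaces all satisfy $(a)$--$(d)$: they are hereditary, stable under finite products (via $\dim(X\times Y)\leq\dim X+\dim Y$), closed under countable closed unions (the countable sum theorem), and preserved under perfect finite-to-one preimages. So the entire content lies in Theorem \ref{theorem-main}, and my plan is to prove it; verifying $(a)$--$(d)$ in each of the three cases then deduces Corollary \ref{cor-main} mechanically.

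The strategy is a \emph{support-and-stratification} argument. The goal is to write $Y=\bigcup_n F_n$ as a countable union of closed subsets, each admitting a perfect finite-to-one map $\phi_n:F_n\to X^{m_n}$ for some $m_n\in\N$. Given such a presentation, property $(b)$ gives $X^{m_n}\in\mathcal P$, $(a)$ gives $\phi_n(F_n)\in\mathcal P$, $(d)$ lifts this to $F_n\in\mathcal P$, and $(c)$ concludes $Y\in\mathcal P$. The supports are produced from uniform continuity of $T$ relative to the pointwise topology: for each $y\in Y$ and $k\in\N$ there will be a finite set $S_k(y)\subset X$ and $\delta_k(y)>0$ such that if $|f(x)-g(x)|<\delta_k(y)$ for every $x\in S_k(y)$, then $|T(f)(y)-T(g)(y)|<1/k$. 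Stratifying $Y$ by the finite data $|S_k(y)|\leq m$ and $\delta_k(y)\geq 1/j$, and then passing to closures in $Y$ (fixing an enumeration of the support, padded with a base point of $X$ whenever the cardinality drops in the limit), should yield the desired countable closed covering with continuous finite-to-one coordinate maps $\phi_n:F_n\to X^{m_n}$.

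The main obstacle will be the closedness (hence properness, since fibers are finite) of each $\phi_n$, and this is exactly where the inversely bounded hypothesis enters. Suppose for contradiction that $y_k\to y_\infty$ in $F_n$ while $\phi_n(y_k)$ escapes every compact subset of $X^{m_n}$; metrizability of $Y$ justifies this sequential setup. I would then choose a norm-bounded sequence $g_k\in C^*(Y)$ that separates $y_k$ from $y_\infty$ in a controlled way, and use inverse boundedness to lift $\{g_k\}$ to a norm-bounded sequence $f_k\in C^*(X)$ with $T(f_k)=g_k$. The support-detection property of $S_k$ then pins the non-triviality of each $f_k$ to points of $S_k(y_k)$ escaping to infinity in $X$, and one derives a contradiction by constructing (using metrizability of $X$) a test function incompatible with the uniform norm bound on $\{f_k\}$. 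The full strength of inverse boundedness --- a single uniform norm bound for an entire sequence of preimages, not merely for one function at a time --- is precisely what makes this contradiction available, and is the reason the weaker ``$c$-good'' hypothesis used in \cite{gkm} can be relaxed here.
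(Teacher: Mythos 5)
Your deduction of Corollary \ref{cor-main} is exactly the paper's: the three properties satisfy $(a)$--$(d)$ by the classical subspace, product, countable closed sum, and dimension-lowering theorems, and then Theorem \ref{theorem-main} applies; there is nothing more to the corollary. The one substantive point concerns your accompanying sketch of Theorem \ref{theorem-main}, which otherwise follows the same Gul'ko-support stratification as the paper but misplaces where the inversely-bounded hypothesis is used. In the paper, closedness (indeed perfectness) of the support map $\varphi_{pq}$ restricted to a closed subset of $M(p,q)$ is automatic for any uniformly continuous surjection --- this is property $(9)$, quoted from \cite{mp} --- whereas inverse boundedness is what forces the fibers of these maps to be \emph{finite} (Claim \ref{claim1}). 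Your sketch treats finite fibers as a byproduct of the stratification (``hence properness, since fibers are finite''), but the stratification gives no control on how many points $y$ share a given support; the paper handles this by taking an infinite compact fiber, choosing a convergent sequence $\{y_m\}$ of distinct points in it, lifting bump functions $g_m$ with $g_m(y_m)=2p$ and $g_m\equiv 0$ off small neighborhoods to a norm-bounded sequence $\{f_m\}\subset[-r,r]^X$, and using an accumulation point of $\{f_m\}$ to produce $i\neq j$ with $|f_i-f_j|<1$ on the common support, contradicting $|\alpha_{y_j}(f_j)-\alpha_{y_j}(f_i)|=2p>p$. The escape-to-infinity scenario you aim inverse boundedness at is already excluded without it, so if you executed your plan as written you would be missing the step that actually needs the hypothesis.
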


Note that item (ii) was established in \cite[Theorem 1.1]{ev} for arbitrary Tychonoff spaces $X, Y$ and $c$-good surjections $T$.
However, we don't know whether there exists a uniformly continuous inversely bounded map which is not $c$-good for some $c>0$.

A linear continuous version of Theorem \ref{theorem-main} is also true (for metrizable compact spaces it was implicitly established in \cite{llp}).

\begin{thm}\label{linear} Let $X$ be a metrizable space and $Y$ be a perfectly normal space.
Suppose that $T: D_p(X)\to D_p(Y)$ is a linear continuous surjection.
For any topological property $\mathcal P$ satisfying conditions $(a)-(d)$ above, if $X\in\mathcal P$ then $Y\in\mathcal P$.
\end{thm}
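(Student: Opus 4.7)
The strategy is to adapt the stratification-via-support argument of Theorem~\ref{theorem-main} to the linear setting, where the support map comes essentially for free from the description of the topological dual of $D_p(X)$. Perfect normality of $Y$ will play the role that metrizability of $Y$ plays in Theorem~\ref{theorem-main}: it provides the countable closed decompositions needed to apply condition $(c)$.

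For every $y\in Y$, the linear continuous functional $f\mapsto T(f)(y)$ on $D_p(X)$ is finitely supported, so
\[
T(f)(y)=\sum_{i=1}^{n(y)}\lambda_i(y)f(x_i(y))\qquad(f\in D(X))
\]
with $x_i(y)\in X$ pairwise distinct and $\lambda_i(y)\neq 0$. Setting $Y_n=\{y:n(y)\leq n\}$, and using metrizability of $X$ together with pointwise continuity of $T$, I would check that each $Y_n$ is closed in $Y$, so each stratum $A_n=Y_n\setminus Y_{n-1}$ is $F_\sigma$ in the perfectly normal space $Y$. Writing $A_n=\bigcup_k F_{n,k}$ with $F_{n,k}$ closed in $Y$, I would then refine each $F_{n,k}$---using perfect normality together with a countable base for $X$---into closed subsets $G\subset F_{n,k}$ on which every coordinate $x_i:G\to X$ is continuous.

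The key claim is then that $\Phi:=(x_1,\ldots,x_n):G\to X^n$ is at most $n$-to-one. Indeed, if distinct points $y_1,\ldots,y_m\in G$ all satisfy $\Phi(y_j)=(a_1,\ldots,a_n)$, then their coefficient vectors $(\lambda_1(y_j),\ldots,\lambda_n(y_j))\in\R^n$ must be linearly independent: any nontrivial dependence $\sum_j c_j\lambda(y_j)=0$ would yield $\sum_j c_j T(f)(y_j)=0$ for every $f\in D(X)$, and by surjectivity of $T$ this forces $\sum_j c_j g(y_j)=0$ for every $g\in D(Y)$, contradicting the linear independence of point evaluations $\delta_{y_j}$ on a Tychonoff space. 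Hence $m\leq n$. A graph-closure argument, using closedness of $G$ in $Y$ and continuity of $\Phi$, then promotes $\Phi$ to a perfect map with finite fibers onto a closed subspace of $X^n$.

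Finally, $X\in\mathcal P$ gives $X^n\in\mathcal P$ by $(b)$ and, by $(a)$, every closed subspace of $X^n$ lies in $\mathcal P$; $(d)$ then yields $G\in\mathcal P$; and $(c)$ assembles the countably many $G$'s into $F_{n,k}$, then into $A_n$, and finally into $Y$. The principal obstacle is the coordinate-selection refinement together with the verification that $\Phi$ is perfect while $Y$ is only perfectly normal rather than metrizable. Both points are to be controlled by combining metrizability of $X$, perfect normality of $Y$, and the finite-fiber bound $m\leq n$ extracted from linear independence of the coefficient vectors; the linearity of $T$ is indispensable here, since without it the very notion of support, on which the entire stratification rests, is unavailable.
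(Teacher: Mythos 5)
Your stratification by support size and your linear-independence argument bounding the fiber cardinality are correct; in fact the observation that distinct $y_1,\dots,y_m$ with a common support force the coefficient vectors $(\lambda_1(y_j),\dots,\lambda_n(y_j))$ to be linearly independent in $\R^n$ (since point evaluations at distinct points of a Tychonoff space are linearly independent on $D(Y)$) is a cleaner route to finite fibers than the paper's, which goes through the Dugundji extension theorem, Proposition \ref{prop-bounded}, a quoted result of \cite{gkm}, and the inverse-boundedness compactness argument of Claim \ref{claim1}. The genuine gap is the sentence that ``promotes'' $\Phi=(x_1,\dots,x_n):G\to X^n$ to a perfect map onto a closed subspace of $X^n$. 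Continuity of $\Phi$, closedness of $G$ in $Y$, and finiteness of the fibers do \emph{not} imply that $\Phi$ is a closed map or that $\Phi(G)$ is closed: $\arctan:\R\to\R$ is a continuous injection of a closed space with non-closed image and is not perfect. Without closedness of $\Phi(G)$ you cannot invoke condition (a) to conclude $\Phi(G)\in\mathcal P$, and without perfectness you cannot invoke condition (d) to pull $\mathcal P$ back to $G$, so the descent collapses exactly at this step.

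What is actually needed here --- and what the paper supplies in Claim \ref{claim2} --- is the boundedness property of supports (property (P4)): if $A\subset X$ is bounded, then $\{y\in Y:\supp(l_y)\subset A\}$ is bounded in $Y$. Given a sequence $\{y_m\}$ in $G$ with $\Phi(y_m)$ convergent in $X^n$, the union of all the supports together with the limit points is compact, hence all $y_m$ lie in a bounded subset of $Y$; since an infinite closed discrete subset of a normal space is never bounded, $\{y_m\}$ must accumulate in $G$, and only this yields closedness of $\Phi$ and compactness of its fibers. This is a nontrivial input that your proposal never appeals to. Two smaller points: the coordinate-selection refinement can be avoided altogether by mapping into $[X]^n$ with the Vietoris topology, as the paper does; and the case $D=C^*$ needs separate treatment --- the paper handles it by showing (Proposition \ref{prop-bounded}) that a linear continuous surjection $C_p^*(X)\to C_p^*(Y)$ is automatically $c$-good, hence inversely bounded, and then quoting Theorem \ref{theorem-main}, whereas the support properties you use are established in the literature for $C_p$.
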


In the last part of our paper we show that for any two 
metrizable spaces $X$ and $Y$ such that there is a linear continuous surjection $T: C_p(X)\to C_p(Y)$ or $T: C_p^*(X)\to C_p^*(Y)$,
and $X$ is scattered, then so is $Y$ (Theorem \ref{linear2}).
Also, we apply Theorem \ref{theorem-main} to the property $\mathcal P$
of being a strongly $\sigma$-scattered space and to the property $\mathcal P$ of being a $\Delta_1$-space. 
All necessary definitions are given in Section \ref{section4}.

\section{Proof of Theorem \ref{theorem-main}}
Our proof is based on the idea of support introduced by Gul'ko \cite{gu}, see also \cite{mp}, where this technique is well described. 
For every $y\in Y$ there is a map $\alpha_y:D_p(X)\to\mathbb R$, $\alpha_y(f)=T(f)(y)$. Since $T$ is uniformly continuous, so is each $\alpha_y$. 
For every $y\in Y$ we consider the family $\mathcal A(y)$ of all finite sets $K\subset X$ such that $a(y,K)<\infty$, where 
$$a(y,K)=\sup\{|\alpha_y(f)-\alpha_y(g)|:f, g\in D(X), |f(x)-g(x)|<1{~}\forall x\in K\}.$$
Note that $a(y,\varnothing)=\infty$ since $T$ is surjective.
For every  $p, q \in \N$ we define
$$Y(p,q)=\{y\in Y:\exists K\in\mathcal A(y){~}\hbox{with}{~} a(y,K)\leq p{~}\hbox{and}{~}|K|\leq q\}$$
and $M(p)=\bigcup\{M(p,q): q\in \N \}$, where
$$M(p,q)=\{y\in Y(p,q):a(y,K)>2p{~} \forall K\subset X{~}\hbox{with}{~}|K|\leq q-1\}.$$
Gul'ko's methodology in \cite{gu} (see also \cite{mp}) was developed for metrizable spaces, but the extension of Gul'ko arguments from \cite{k} and \cite{ev} shows that for every Tychonoff spaces $X$ and $Y$ all conditions $(1)- (8)$ below are valid:

\begin{itemize}
\item[(1)] $\mathcal A(y)$ is non-empty and it is closed under finite intersections. Moreover, $a(y,K_1\cap K_2)\leq a(y,K_1)+a(y,K_2)$ for all $K_1,K_2\in\mathcal A(y)$;
\item[(2)] $Y(p,q)$ is closed in $Y$ for all $p,q\in\N$;
\item[(3)] $y\in M(p)$ provided $p\geq a(y)$;
\item[(4)] $M(p,1)=Y(p,1)$ and $M(p,q)=Y(p,q)\setminus Y(2p,q-1)$ for $q\geq 2$;
\item[(5)] $Y=\bigcup\{M(p,q): p, q\in\N\}$;
\item[(6)] $M(p,q_1)\cap M(p,q_2)=\varnothing$ for $q_1\neq q_2$;
\item[(7)] For every $y\in M(p,q)$ there is a unique finite $K_p(y)\subset X$ with $|K_p(y)|=q$ and $a(y,K_p(y))\leq p$;
\item[(8)] The map $\varphi_{pq}:M(p,q)\to [X]^q$, $\varphi_{pq}(y)=K_p(y)$, is continuous, where $[X]^q$ denotes the set of all $q$-point subsets of $X$ with the Vietoris topology. 
\end{itemize}
Moreover, if $X$ is metrizable and $Y$ is normal, the map $\varphi_{pq}$ satisfies the following additional condition:
\begin{claim}\label{claim1}
If $M\subset M(p,q)$ is closed in $Y$ for some $p,q$, then the map $\varphi_{pq}\restriction_M: M\to [X]^q$ is closed and each fiber of 
$\varphi_{pq}\restriction_M$ is countably compact. 
\end{claim}
Because $[X]^q$ is a metrizable space, it suffices to show that if $\{y_n\}$ is a sequence in $M$ such that $\varphi_{pq}(y_n)$ converges to some $K\in [X]^q$, then $\{y_n\}$ has an accumulation point in $M$. That statement was established in the proof of condition $(9)$ from \cite{mp} in case both $X$ and $Y$ are metrizable, but the same proof works when $X$ is metrizable and $Y$ is normal.

Since each $Y(p,q)$ is a closed subset of $Y$, it follows from $(4)$ that each $M(p,q)$ is a countable union of closed subsets $\{F_n(p,q): n\in\N\}$ of $Y$. 
So, by $(5)$, $Y=\bigcup\{F_n(p,q): n, p, q\in\N\}$. According to Claim 1, all maps $\varphi_{pq}^n=\varphi_{pq}\restriction_{F_n(p,q)}: F_n(p,q)\to [X]^q$ are closed and have countably compact fibers.
\begin{claim}\label{claim2}
The fibers of $\varphi_{pq}^n: F_n(p,q)\to [X]^q$ are finite. 
\end{claim} 
We follow the arguments from the proof of \cite[Theorem 4.2]{gkm}.  
Fix $z\in F_n(p,q)$ for some $n, p, q\in\N$ and let $A(z)=\{y\in F_n(p,q):K_p(y)=K_p(z)\}$.  Suppose that $A(z)$ is infinite, so it contains a sequence $S=\{y_m\}$ of distinct points. Because, by Claim 1, $A(z)$ is countably compact, there are two possibilities: either $\{y_m\}$ is closed and discrete or it contains an accumulation point in $A(z)$. Therefore, passing to a subsequence, we may assume that 
for every $y_m$ there exist a neighborhood $U_m$ in $Y$ and a function $g_m: Y\to [0,2p]$ such that: $U_m\cap S=\{y_m\}$,
$g_m(y_m)=2p$ and $g_m(y)=0$ for all $y\not\in U_m$.
 Since $T$ is inversely bounded, there is a norm bounded sequence $\{f_m\}\in C^*(X)$ with $T(f_m)= g_m$. Let $r > 0$ be such that 
$||f_m||\leq r$, $m\in\N$. So, the sequence $\{f_m\}$ is contained in the compact set $[-r, r]^X$. Hence,  $\{f_m\}$ has an accumulation point in  $[-r, r]^X$. This implies the existence of $i\neq j$ such that $|f_i(x)-f_j(x)| < 1$ for all $x\in K_p(z)$.
 Consequently, since $K_p(y_j)=K_p(z)$, $|\alpha_{y_j}(f_j)-\alpha_{y_j}(f_i)|\leq p$. On the other hand, $\alpha_{y_j}(f_j)=T(f_j)(y_j)=g_j(y_j)=2p$ and
$\alpha_{y_j}(f_i)=T(f_i)(y_j)=g_i(y_j)=0$, so $|\alpha_{y_j}(f_j)-\alpha_{y_j}(f_i)|=2p$, a contradiction.

Now we can complete the proof of Theorem \ref{theorem-main}. Suppose that $X$ has a property $\mathcal P$ satisfying conditions $(a)-(d)$. Then so does $X^q$ for each $q$. 
The space $[X]^q$ is homeomorphic to the set $W_q=\{(x_1, x_2, \dots, x_q)\in X^q: x_i\neq x_j{~}\mbox{for}{~}i\neq j\}$ which is open in  $X^q$.
 So, $[X]^q\in\mathcal P$ as a countable union of closed subsets of $X^q$. According to Claim 1, $\varphi_{pq}^n(F_n(p,q))$ is closed in $[X]^q$. Hence,
$\varphi_{pq}^n(F_n(p,q))$ has the property $\mathcal P$. Finally, since the map $\varphi_{pq}^n: F_n(p,q)\to\varphi_{pq}^n(F_n(p,q))$ is perfect and has finite fibers, we obtain  $F_n(p,q)\in\mathcal P$. 
Therefore, by condition $(c)$, $Y=\bigcup\{F_n(p,q): n, p, q\in\N\}$ also has the property $\mathcal P$. 
\hfill $\Box$

\section{Proof of Theorem \ref{linear}}

Suppose that $X$ and $Y$ are Tychonoff spaces and $T: D_p(X)\to D_p(Y)$ is a continuous linear surjection.
Every $y\in Y$ generates a linear continuous map $l_y: D_p(X)\to\mathbb R$ defined by $l_y(f)=T(f)(y)$. 
It is well known, see for example \cite{ar1} or \cite{bd}, that for every $l_y$ there exist a finite set
 $\supp(l_y)=\{x_1(y), x_2(y), \dots, x_k(y)\}$ in $X$ and real numbers $\lambda_i(y)$, $i = 1, 2, \dots, k$,
such that for all $f\in D_p(X)$ we have $l_y(f)=\sum_{i=1}^k\lambda_i(y)f(x_i(y))$. 
 Here we recall some properties of the supports $\supp(l_y)$, see \cite{bd} and \cite[Section 6.8]{vanMill}.

\begin{itemize}
\item[(P1)] If $f\restriction_{\supp(l_y)})=g\restriction_{\supp(l_y)}$ for some $f, g\in D(X)$, then $l_y(f)=l_y(g)$;
\item[(P2)] If $\supp(l_{y_0})\cap U\neq\varnothing$ for some open $U\subset X$ and $y_0\in Y$, 
then $y_0$ has a neighborhood $V\subset Y$ such that $\supp(l_{y})\cap U\neq\varnothing$ for every $y\in V$;
\item[(P3)] Every set $Y_k=\{y\in Y: |\supp(l_y)|\leq k\}$ is closed in $Y$; 
\end{itemize}

A subset $A$ of a space $X$ is {\em bounded} if $f(A)$ is a bounded set in $\mathbb R$ for every $f\in C(X)$.
The following property is valid only in the case when $T$ is a continuous linear surjection between $C_p$-spaces, not for $C_p^*$-spaces.
\begin{itemize}
\item[(P4)] 
Suppose that $X$ and $Y$ are Tychonoff spaces and $T: C_p(X)\to C_p(Y)$ is a continuous linear surjection.
If $A\subset X$ is bounded, then so is the set $\{y\in Y: \supp(l_y)\subset A\}$.
\end{itemize}

In the case of $C_p^*$-spaces, we will use the following property, see \cite[Lemma 1.4.6]{bd}:

\begin{itemize}
\item[(P5)] 
Suppose that $X$ and $Y$ are metrizable spaces and $T:C_p^*(X)\to C_p^*(Y)$ is a continuous linear surjection.
 If $A\subset X$ is compact, then the set $\{y\in Y: \supp(l_y)\subset A\}$ is also compact. 
\end{itemize}

We consider the sets $M_1=Y_1$ and $M_k=Y_k\setminus Y_{k-1}$ for $k\geq 2$.
 Let $S_k: M_k\to [X]^k$ be the map defined by $S_k(y)=\supp(l_y)$. It follows from $(P2)$ that $S_k$ is continuous.
\begin{claim}\label{claim3}
Let $F\subset M_k$ be closed in $Y$ for some $k$. Then the map $S_k\restriction_F: F\to [X]^k$ is closed.
\end{claim}
Since $X$ is a metrizable space, it suffices to show that if $\{y_n\}$ is a sequence in $F$ and $S_k(y_n)$ converges to some $K\in[X]^k$, then
$\{y_n\}$ has an accumulation  point in $F$. Striving for a contradiction, suppose that there is a sequence $\{y_n\}$ in $F$ such that the set 
$Z=\{y_n: n\in\N\}$ is closed and discrete in $Y$. Let $K=\{x_1, x_2, \dots, x_k\}$ and $S_k(y_n)=\{x_1(y_n), x_2(y_n), \dots, x_k(y_n)\}$ for all $n$.
 Since $S_k(y_n)$ converges to $K$ in $[X]^k$, each of the sequences $\{x_i(y_n)\}_{n\in\N}$, $i=1, 2, \dots, k$, converges in $X$ to $x_i$.  Therefore,  
$A=\bigcup_{i=1}^k\{x_i\}\cup\{x_i(y_n)\}_{n\in\N}$ is a compact subset of $X$. Because $X$ is a metrizable space, according to Dugundji Extension Theorem \cite{d}, (see also \cite{vanMill}), there is a continuous linear map $\Theta: C_p(A)\to C_p^*(X)$ such that $\Theta(g)\restriction_{A} = g$ for all $g\in C(A)$. Thus, the linear map
$\varphi: C_p(A)\to D_p(Z)$, $\varphi(g)=T(\Theta(g))\restriction_{Z}$, is continuous, where $D(Z)=C(Z)$ if $D(Y)=C(Y)$ and $D(Z)=C^*(Z)$ if $D(Y)=C^*(Y)$. 
Moreover, $\varphi$ is surjective. Indeed, take $h\in D(Z)$ and its continuous extension $\overline h\in D(Y)$. Then $T(f)=\overline h$ for some $f\in D(X)$
 and the functions $f$ and $g=\Theta(f\restriction_{A})$ have the same restrictions on $A$. Hence, by  $(P1)$, $l_y(f)=l_y(g)$ for all $y\in Z$.
 Thus, $\varphi(f\restriction_{A}) = h$. If $D(Z)=C(Z)$, then by $(P4)$, the set $Z$ is bounded. If $D(Z)=C^*(Z)$, according to $(P5)$, $Z$ is also bounded. Therefore, in both possible cases we have a contradiction.

Since each $Y_k$ is closed in $Y$ and $Y$ is perfectly normal, $M_{k}$ is the union of a countably many closed subsets $M_{kn}$ of $Y$.
 Let $S_{kn}=S_k\restriction_{M_{kn}}$. According to Claim \ref{claim3}, each $S_{kn}$ is a closed map.
\begin{claim}\label{claim4}
The fibers of each map $S_{kn}: M_{kn}\to [X]^k$ are finite.
\end{claim}
Indeed, let $z\in M_{kn}$, $S_{kn}(z)=\{x_1, x_2, \dots, x_k\}$ and $A(z)=\{y\in M_{kn}:S_{kn}(y)=S_{kn}(z)\}$. Since $\supp(l_y)=S_{kn}(z)$ for all $y\in A(z)$, as in the proof of Claim 3, there is a continuous linear surjection  $\phi:C_p(S_{kn}(z))\to D_p(A(z))$. Because $C_p(S_{kn}(z))$ is finite-dimensional, by linearity, so is $D_p(A(z))$.
Thus, $A(z)$ is finite.

Finally, as in the last paragraph from the proof of Theorem \ref{theorem-main}, we can show that $Y$ has the property $\mathcal P$. 
\hfill $\Box$ 

\section{Scattered-like properties $\mathcal P$}\label{section4}
To begin with, we recall several notions and facts (probably well-known) which will be discussed in this section.
 A space $X$ is said to be {\em scattered} if every nonempty
subset $A$ of $X$ has an isolated point in $A$.
A Tychonoff scattered space need not to be zero-dimensional \cite{Solomon},
while every metrizable scattered space is completely metrizable (see, for instance, \cite{Michael}) and zero-dimensional.

A space $X$ is said to be (strongly) {\em $\sigma$-scattered} if $X$ can be represented as a countable union of (closed) scattered subspaces,
and $X$ is called (strongly) {\em $\sigma$-discrete} if  $X$ can be represented as a countable union of (closed) discrete subspaces.
By the classical result of Stone \cite{Stone}, all these four properties are equivalent in the class of metrizable spaces (for a more modern treatment of this result see \cite{Plewik}). Hence, every metrizable $\sigma$-scattered space must be zero-dimensional.

Recall the following results of Baars:

\begin{thm}\label{Baars1} \cite{b1} Let $X$ and $Y$ both be first countable paracompact spaces.
Suppose that $T: C_p(X) \to C_p(Y)$ is a linear homeomorphism. Then $X$ is scattered if and only if $Y$ is scattered.
\end{thm}

\begin{thm}\label{Baars2} \cite{b2}, \cite{b3}
Let $X$ and $Y$ be metrizable spaces.
Suppose that $T: C_p^*(X) \to C_p^*(Y)$ is a linear homeomorphism. Then $X$ is scattered if and only if $Y$ is scattered.
\end{thm}

It is an open problem whether Theorem \ref{Baars2} remains true if both $X$ and $Y$ are assumed to be first countable paracompact spaces
(\cite[Question 4.8]{b3}), despite of the following structural result which apparently is due to Telg\' arsky \cite[Theorem 8]{Tel}: every scattered first countable paracompact space is metrizable 
and strongly $\sigma$-discrete.  

Below we strengthen both Theorems \ref{Baars1} and \ref{Baars2} in case that $X$ and $Y$ are metrizable spaces,
assuming only that $T$ is a linear continuous surjection.

We will use several well-known facts about the linear topological spaces, which are dual to $D_p(X)$.
In essence, we have already described some properties of the dual to $D_p(X)$ in the proof of Theorem \ref{linear}.
We repeat that for any Tychonoff space $X$ the dual space of $D_p(X)$ algebraically can be identified with a linear space of formal linear combinations $L(X)$,
where $X$ is a Hamel basis in $L(X)$. For each natural $n\in \N$
denote by $M_n(X)$ the subspace of $L(X)$ formed by all linear combinations of the reduced length precisely $n$.
Let $\tau_p$ be the topology on $L(X)$ when $L(X)$ is considered as a weak topological dual to $C_p(X)$,
and let $\tau_b$ be the topology on $L(X)$ when $L(X)$ is considered as a weak topological dual to $C_p^*(X)$, respectively.
In general, $\tau_b$ does not coincide with $\tau_p$ on the whole linear space $L(X)$ \cite{BMT}. 
However, the analysis of the proof of \cite[Proposition 0.5.17]{ar1} easily shows that the topologies $\tau_b$ and $\tau_p$ restricted to
subsets $M_n(X)$ do coincide. (One need to do some cosmetic changes which are based on the following trivial remark: for any point $x\in X$ and open $U \subset X$ containing $x$ there is a {\em bounded} continuous function $f$ on $X$ such that
$f(x)=1, f\restriction_{X\setminus U} = 0$). Hence, from \cite[Proposition 0.5.17]{ar1} (see also \cite[Proposition 2.1]{KawL}) we can deduce the following result.

\begin{pro}\label{ar1}
$(M_n(X), \tau_p) = (M_n(X), \tau_b)$ is homeomorphic to a subspace of the Tychonoff product $(\R^*)^n\times X^n$, where $\R^*= \R\setminus\{0\}$. 
\end{pro}

Now we have a result for all Tychonoff spaces (everywhere below, except for Theorem \ref{linear2}, we suppose that $T: D_p(X)\to D_p(Y)$ is a surjection such that all possible four cases are considered).

\begin{pro}\label{linear3} Let $X$ and $Y$ be Tychonoff spaces.
Suppose that $T: D_p(X)\to D_p(Y)$ is a linear continuous surjection. 
If $X$ is $\sigma$-scattered ($\sigma$-discrete), then $Y$ also is $\sigma$-scattered ($\sigma$-discrete, respectively). 
\end{pro}

\begin{proof}
The adjoint mapping $T^*$ isomorphically embeds the dual space of $D_p(Y)$, i.e. $(L(Y), \tau_p)$ or $(L(Y), \tau_b)$ into the dual space of $D_p(X)$, i.e. $(L(X), \tau_p)$ or
 $(L(X), \tau_b)$.
In all cases, by Proposition \ref{ar1} the space
$Y$ can be represented as a countable union of
subspaces $Y_i, i \in \N$, such that each $Y_i$ is homeomorphic to a subspace of $(\R^*)^n\times X^n$ for some $n=n(i)$.

Consider the projection $p_i$ of each of the above pieces $Y_i \subset (\R^*)^n\times X^n$ to the second factor $X^n$.
The following property of projections $p_i$ can be recovered from the proof of \cite[Proposition 2.1]{KawL}.
Indeed, \cite[Proposition 2.1]{KawL} has been formulated and proved assuming that $X$ and $Y$ are metrizable compact spaces,
however, the proof of the following claim which is a part of the proof of \cite[Proposition 2.1]{KawL} is valid for any Tychonoff spaces $X$ and $Y$.

\vspace{0.08in}
{\bf Claim.} Every projection $p_i: Y_i \longrightarrow X^n$ is a finite-to-one mapping.
\vspace{0.08in}
\newline
Evidently, $X^n$ is $\sigma$-scattered (resp., $\sigma$-discrete) provided so is $X$. Since $p_i$ is continuous and its fibers are finite, for every $Z\subset X^n$ and every isolated point $z$ in $Z$ the fiber 
$p_i^{-1}(z)$ consists of isolated points in $p_i^{-1}(Z)$. We conclude, if $X$ is $\sigma$-scattered ($\sigma$-discrete), then each $Y_i, i \in \N$ is $\sigma$-scattered ($\sigma$-discrete, respectively),
and then $Y$ is $\sigma$-scattered ($\sigma$-discrete, respectively).
\end{proof}

\begin{thm}\label{linear2} Let $X$ and $Y$ be metrizable spaces.
Suppose that $T: D_p(X)\to D_p(Y)$ is a linear continuous surjection such that either $D(X)=C(X)$ and $D(Y)=C(Y)$ or $D(X)=C^*(X)$ and $D(Y)=C^*(Y)$. 
If $X$ is scattered, then so is $Y$. 
\end{thm}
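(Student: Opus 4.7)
The plan is to invoke Theorem~\ref{linear} with $\mathcal P$ being the class of metrizable scattered spaces. Since $Y$ is metrizable it is in particular perfectly normal, so the hypotheses of Theorem~\ref{linear} are met. The task therefore reduces to verifying that scatteredness, regarded as a property of metrizable spaces, satisfies the four closure conditions $(a)$--$(d)$ listed in Section~\ref{intro}.

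Conditions $(a)$ and $(b)$ are classical. Every subspace of a scattered space is scattered (the relative isolated point argument is literally the definition), which gives $(a)$. For $(b)$, if $X_1$ and $X_2$ are scattered and $\varnothing\neq A\subset X_1\times X_2$, one chooses a point $a_1$ isolated in the projection $\pi_1(A)$ and then a point $a_2$ isolated in the nonempty slice $\{x_2:(a_1,x_2)\in A\}$; the product of the witnessing neighborhoods isolates $(a_1,a_2)$ in $A$. Condition $(d)$ is also elementary: for a perfect map $f\colon X\to Y$ with finite fibers and $Y$ scattered, take any nonempty $A\subset X$, select $y$ isolated in $f(A)$, separate the finitely many points of $f^{-1}(y)\cap A$ by pairwise disjoint neighborhoods in $X$, and intersect with the preimage of an open set in $Y$ that isolates $y$ in $f(A)$; this produces a point isolated in $A$.

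The only nontrivial ingredient is condition $(c)$, namely that a countable union of closed scattered subsets of a metrizable space is itself scattered. This is precisely the content of Stone's classical theorem \cite{Stone} recalled just before the statement: in the metric setting, scatteredness, (strong) $\sigma$-scatteredness, and (strong) $\sigma$-discreteness all coincide. In particular strong $\sigma$-scatteredness implies scatteredness, which is exactly $(c)$.

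With $(a)$--$(d)$ in hand, Theorem~\ref{linear} immediately yields $Y\in\mathcal P$, i.e.\ $Y$ is scattered. The main obstacle of this scheme is condition $(c)$, which is the only one that is genuinely nonelementary and where metrizability of $Y$ is essentially used; the other three conditions are routine.
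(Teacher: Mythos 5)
Your reduction to Theorem~\ref{linear} breaks down at condition $(c)$, and the failure is not reparable: scatteredness does \emph{not} satisfy $(c)$. The space $\mathbb{Q}$ of rationals is a countable union of closed singletons, each of which is scattered, yet $\mathbb{Q}$ is dense-in-itself and hence not scattered. You have misread the cited result of Stone: it asserts that the four properties $\sigma$-scattered, strongly $\sigma$-scattered, $\sigma$-discrete and strongly $\sigma$-discrete coincide for metrizable spaces; it does \emph{not} assert that any of them implies scatteredness (the paper only deduces zero-dimensionality from them, not scatteredness). Your verifications of $(a)$, $(b)$ and $(d)$ are fine, but without $(c)$ the general scheme of Theorem~\ref{linear} simply does not apply to the class of scattered spaces. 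Indeed, this is exactly why the paper states Proposition~\ref{linear3} (for \emph{strongly $\sigma$-scattered} spaces, where $(c)$ holds by definition) as a consequence of the general scheme, but must prove Theorem~\ref{linear2} by an entirely different argument.

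The paper's actual route is: for $D_p=C_p$ it cites an earlier result, and for $D_p=C_p^*$ it shows that the property of being a $\Delta$-space passes from $X$ to $Y$ under a linear continuous surjection of $C_p^*$-spaces (via the equivalence of ``$X$ is a $\Delta$-space'' with ``$C_p^*(X)$ is distinguished''), notes that a metrizable scattered $X$ is both a $\Delta$-space and completely metrizable, invokes the Baars--de Groot--Pelant theorem to conclude $Y$ is completely metrizable, and finally uses the fact that a \v{C}ech-complete $\Delta$-space is scattered. None of these ingredients appears in your proposal, and the gap at $(c)$ cannot be bridged within your framework.
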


\begin{proof} In the case $T: C_p(X)\to C_p(Y)$ the statement has been formulated and proved earlier \cite[Proposition 3.9]{KL2}. 
Here we provide a simpler and unified proof for both options.
By Proposition \ref{linear3} $Y$ is $\sigma$-scattered. Since $Y$ is metrizable, $Y$ is strongly $\sigma$-discrete, by the aforementioned result of Stone \cite{Stone}.
From another hand, every metrizable and scattered space is completely metrizable.
Therefore, $X$ is completely metrizable and then $Y$ also is completely metrizable, by the main result of \cite{Pelant}.
Finally, by the Baire category theorem every \v{C}ech-complete strongly $\sigma$-discrete space is scattered.
\end{proof}

We don't know whether analogues of Theorem \ref{linear2} and Proposition \ref{linear3}
are valid under a weaker assumption: $T: D_p(X)\to D_p(Y)$ is a uniformly continuous surjection.
This is because the proof of Theorem \ref{linear2} relies on the result of Baars-de Groot-Pelant \cite{Pelant} (that completeness is preserved
by continuous linear surjections $T:C_p(X)\to C_p(Y)$ or $T:C_p^*(X)\to C_p^*(Y)$), while the following major question posed by Marciszewski and Pelant is still open.

\begin{prob}\label{problem2} {\em (See \cite[2.18. Problem]{mar})}
Let $X$ and $Y$ be (separable) metrizable spaces and let $T: D_p(X) \to D_p(Y)$ be a 
a uniformly continuous surjection (uniform homeomorphism). Let $X$ be completely metrizable.
Is $Y$ also completely metrizable?
\end{prob}

Moreover, the next problem is also open:

\begin{prob}\label{problem3}
Let $X$ and $Y$ be (separable) metrizable spaces and let $T: D_p(X) \to D_p(Y)$ be 
an inversely bounded uniformly continuous surjection. Let $X$ be completely metrizable.
Is $Y$ also completely metrizable?
\end{prob}

We obtain a $\sigma$-scattered analogue of Theorem \ref{linear2} for inversely bounded uniformly continuous surjections.

\begin{thm}\label{uniform_scattered} Let $X$ and $Y$ be metrizable spaces.
Suppose that $T: D_p(X)\to D_p(Y)$ is an inversely bounded uniformly continuous surjection. 
If $X$ is strongly $\sigma$-scattered, then $Y$ also is strongly $\sigma$-scattered.
\end{thm}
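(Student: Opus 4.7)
The plan is to reduce the theorem directly to Theorem~\ref{theorem-main} by showing that, in the class of metrizable spaces, the property $\mathcal P$ of being strongly $\sigma$-scattered satisfies the axiomatic conditions $(a)$--$(d)$ from Section~\ref{intro}. Once this verification is carried out there is nothing further to do.

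The first step is to pass to a more convenient description of $\mathcal P$: by Stone's theorem recalled at the beginning of this section, a metrizable space is strongly $\sigma$-scattered if and only if it is strongly $\sigma$-discrete, i.e.\ a countable union of closed discrete subsets. I would work throughout with this latter formulation, since ``closed discrete'' is a particularly robust property to track under the operations in $(a)$--$(d)$.

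Conditions $(a)$ and $(c)$ are essentially one-liners: intersecting a closed discrete family with a closed subspace preserves both attributes, and concatenating two countable closed discrete decompositions gives another. Condition $(b)$ follows by taking products $D_n \times D'_m$ of the witnessing families, since a product of two closed discrete sets is closed and discrete in the product. The only step that requires a short argument is $(d)$. Given a perfect map $f \colon Z \to W$ with finite fibers and a decomposition $W = \bigcup_n D_n$ with each $D_n$ closed and discrete, I would write $Z = \bigcup_n f^{-1}(D_n)$; each preimage is closed in $Z$ by continuity. Moreover, inside $f^{-1}(D_n)$ every fiber $f^{-1}(w)$ with $w \in D_n$ is both finite and open (because $\{w\}$ is open in the discrete set $D_n$), and since a finite $T_1$ space is discrete, each point of $f^{-1}(D_n)$ admits a singleton neighborhood inside $f^{-1}(D_n)$. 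Hence $f^{-1}(D_n)$ is closed and discrete in $Z$, which makes $Z$ strongly $\sigma$-discrete.

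Having verified $(a)$--$(d)$, Theorem~\ref{theorem-main} applies and delivers the strong $\sigma$-scatteredness of $Y$. I do not foresee a serious obstacle; the only step beyond bookkeeping is the finite-fibers argument in $(d)$, which becomes transparent once we pass to the strongly $\sigma$-discrete formulation via Stone's theorem.
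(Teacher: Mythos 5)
Your proposal is correct and follows essentially the same route as the paper: verify that strong $\sigma$-scatteredness satisfies conditions $(a)$--$(d)$ and invoke Theorem~\ref{theorem-main}. The only cosmetic difference is that you check the conditions in the equivalent strongly $\sigma$-discrete formulation via Stone's theorem (which the paper itself recalls), whereas the paper states the closure properties for strongly $\sigma$-scattered spaces directly.
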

\begin{proof}
Any product of finitely many scattered (resp., strongly $\sigma$-scattered)  spaces is scattered (resp.,
strongly $\sigma$-scattered). Evidently, any closed subset of a strongly 
$\sigma$-scattered space is strongly $\sigma$-scattered. It is also true that the preimage of a strongly $\sigma$-scattered
space under a continuous map with finite fibers is strongly $\sigma$-scattered. Hence, applying Theorem \ref{theorem-main} we complete the proof.
\end{proof}

The last class of topological spaces that we consider in this paper is the class of $\Delta_1$-spaces.
A topological space $X$ is called a {\em $\Delta_1$-space} if any disjoint sequence $\{A_n : n\in\N\}$ of
countable subsets of $X$ has a point-finite open expansion, i.e. there exists a point-finite sequence
 $\{U_n : n\in\N\}$ of open subsets of $X$ such that $A_n \subseteq U_n$ for each $n\in\N$.
Equivalently, $X$ is a $\Delta_1$-space if any countable sequence of distinct points in $X$ has a point-finite open expansion \cite{KKL}.

The class of Tychonoff $\Delta_1$-spaces is tightly connected to certain properties of $C_p(X)$. 
Another motivation for studying the $\Delta_1$-spaces is provided by the fact
that it extends the classical notion of the $\lambda$-sets of reals. Recall that $X \subset \R$ is called
a $\lambda$-set if every countable $A \subset X$ is a $G_{\delta}$-subset of $X$ and, more generally,
a topological space $X$ is a $\lambda$-space if every countable subset $A \subset X$ is a $G_{\delta}$-subset.
 The study of $\lambda$-sets dates back to 1933 when Kuratowski proved in ZFC that there exist uncountable $\lambda$-sets.
According to \cite[Theorem 2.19]{KKL}, a metrizable space is a $\Delta_1$-space if and only if it is a $\lambda$-space.
If $X$ is a \v{C}ech-complete (in particular, if $X$ is a compact or a completely metrizable) space
then $X$ is a $\Delta_1$-space if and only if $X$ is scattered, see \cite[Corollary 2.16]{KKL}.

It was shown in \cite[Theorem 3.16]{KKL} that if $X$ and $Y$ are Tychonoff spaces and 
there is a linear continuous surjection $T: C_p(X) \to C_p(Y)$, 
 then $Y\in\Delta_1$ provided $X\in\Delta_1$. A slight modification of the proof shows that the same is true when
$T: C_p^*(X) \to C_p^*(Y)$.

\begin{thm}\label{uniform_delta}
Let $X$ and $Y$ be metrizable spaces.
Suppose that $T: D_p(X)\to D_p(Y)$ is an inversely bounded uniformly continuous surjection. 
If $X$ is a $\Delta_1$-space then $Y$ also is a $\Delta_1$-space.
\end{thm}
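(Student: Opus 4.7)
The plan is to apply Theorem \ref{theorem-main} with $\mathcal P$ taken to be ``metrizable $\Delta_1$-space,'' which by \cite[Theorem 2.19]{KKL} coincides for metrizable spaces with the property of being a $\lambda$-space, i.e., every countable subset is a $G_\delta$. The task reduces to verifying the closure conditions (a)--(d) preceding Theorem \ref{theorem-main} for this class.

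Conditions (a) and (c) admit direct $G_\delta$-arguments. For (a), a countable $A$ in a closed subspace $F\subset X$ is countable in $X$, hence $G_\delta$ in $X$, and thus $G_\delta$ in $F$. For (c), if $X=\bigcup_n F_n$ with each $F_n$ a closed $\lambda$-subspace and $A\subset X$ is countable, then each $A\cap F_n$ is $G_\delta$ in $F_n$, so each $F_n\setminus A$ is $F_\sigma$ in $F_n$ and, since $F_n$ is closed in $X$, also $F_\sigma$ in $X$; thus $X\setminus A$ is $F_\sigma$ and $A$ is $G_\delta$ in $X$.

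The main obstacle is condition (b), because a countable subset of a product need not be a product of countable sets. My approach rests on the elementary observation that in any countable metrizable space every subset is $G_\delta$: singletons are closed, so any subset is $F_\sigma$, and its complement in the ambient countable space is itself countable, hence $F_\sigma$. Given countable $A\subset X\times Y$ with $X,Y$ metrizable $\lambda$-spaces, let $A_1=\pi_X(A)$ and $A_2=\pi_Y(A)$; both are countable and therefore $G_\delta$, whence $A_1\times A_2$ is $G_\delta$ in $X\times Y$. Since $A_1\times A_2$ is a countable metrizable space, $A$ is $G_\delta$ in it, so $A=(A_1\times A_2)\cap H$ for some $G_\delta$-set $H\subset X\times Y$, and an intersection of two $G_\delta$'s is $G_\delta$. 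Iteration gives that $X^q$ is a $\lambda$-space whenever $X$ is.

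The same trick settles (d): for a perfect finite-fibered $f:X\to Y$ with $Y$ a $\lambda$-space and $A\subset X$ countable, $f(A)$ is a countable $G_\delta$ in $Y$, so $f^{-1}(f(A))$ is $G_\delta$ in $X$ and, having finite fibers over a countable set, is itself countable; hence $A$ is $G_\delta$ in $f^{-1}(f(A))$ and therefore in $X$. With (a)--(d) verified, Theorem \ref{theorem-main} immediately delivers $Y\in\mathcal P$, which is to say $Y$ is a $\Delta_1$-space.
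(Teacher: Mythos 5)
Your proposal is correct, and its top-level skeleton (verify conditions (a)--(d) for a suitable class $\mathcal P$ and invoke Theorem \ref{theorem-main}) is the same as the paper's; but the substance of the verification is genuinely different. The paper works with the $\Delta_1$ property directly: heredity gives (a), conditions (b) and (c) are imported as Theorems 3.14 and 3.9 of \cite{KKL}, and (d) is dismissed as an easy consequence of the definition for continuous finite-to-one maps. You instead pass through the equivalence, valid for metrizable spaces by \cite[Theorem 2.19]{KKL}, between $\Delta_1$-spaces and $\lambda$-spaces, and then check (a)--(d) by hand with $G_\delta$-arguments; your verifications are sound, including the two less obvious ones: for (b), trapping a countable $A\subset X\times Y$ inside the countable $G_\delta$-set $\pi_X(A)\times\pi_Y(A)$ and using that every subset of a countable metrizable space is $G_\delta$, and for (d), the analogous trapping of $A$ inside the countable $G_\delta$-set $f^{-1}(f(A))$. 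What the paper's route buys is brevity and the fact that the cited closure properties of $\Delta_1$ hold for general Tychonoff spaces, not only metrizable ones; what your route buys is a self-contained, elementary argument resting on a single citation, and it makes visible that perfectness is not needed in (d) -- continuity plus finite fibers suffices, which is in fact exactly the form in which the paper's proof uses (d). One cosmetic remark: since $\lambda$-ness is hereditary for arbitrary subspaces, your restriction to closed $F$ in (a) is not needed, though of course it is all that condition (a) demands.
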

\begin{proof}
Obviously, the class $\Delta_1$ is hereditary with respect to any subspace. Moreover, the class $\Delta_1$ satisfies conditions $(b)$ and $(c)$, see Theorem 3.14 and Theorem 3.9 from \cite{KKL}. Further, it is easily seen that if we have a continuous mapping $\varphi: X\to Z$ with finite fibers and $Z\in\Delta_1$, then $X\in\Delta_1$. 
Therefore, we can apply Theorem \ref{theorem-main} to conclude that $X\in\Delta_1$ implies $Y\in\Delta_1$.
\end{proof}

\textbf{Acknowledgements.}
The authors are grateful to the referee for careful reading of the paper and valuable 
suggestions and comments.


\begin{thebibliography}{00}

\bibitem{ar}
A.~Arkhangel'skii, \textit{Problems in $C_p$-theory}, in J. van Mill and G. M. Reed eds., Open Problems in Topology, North-Holland (1990), 601--615.

\bibitem{ar1}
A.~Arkhangel'skii, \textit{Topological Function Spaces}, Kluwer Academic Publishers, Dordrecht, 1992.

\bibitem{bd}
J.~Baars and J.~de Groot, \textit{On topological and linear equivalence of certain function spaces}, CWI tract 86, Centre for for Mathematics and Computer Science, Amsterdam, 1992.

\bibitem{b1}
J.~Baars, \textit{Function spaces on first countable paracompact spaces}, Bull. Pol. Acad. Sci. Math. \textbf{42} (1994), 29--35.

\bibitem{b2}
J.~Baars, \textit{On the $l_p^{\ast}$-equivalence of metric spaces}, Topology Appl. \textbf{298} (2021), 107729.

\bibitem{b3}	
J.~Baars, \textit{Linear equivalence of scattered metric spaces}, Canad. Math. Bull. \textbf{64} (2023), 1354--1367.

\bibitem{Pelant} J.~Baars, J.~de Groot and J. Pelant,
\textit{Function spaces of completely metrizable spaces}, Trans. Amer. Math. Soc. \textbf{340} (1993), 871--883.

\bibitem{BMT}J. Baars, J. van Mill and V. V. Tkachuk,
\textit{Linear equivalence of (pseudo) compact spaces}, 
Quaestiones Math. \textbf{46:3} (2023), 513--518.

\bibitem{d}
J.~Dugundji, \textit{An extension of Tietze's theorem}, Pacific J. Math. \textbf{1} (1951), 353--367.

\bibitem{en}
R.~Engelking, \textit{Theory of dimensions, finite and infinite}, Sigma Series in Pure Mathematics, 10. Heldermann Verlag, 1995.

\bibitem{ev}
A.~Eysen and V.~Valov, \textit{On uniformly continuous surjections between function spaces}, https://arxiv.org/abs/2404.00542, (submitted for publication).


\bibitem{gf}
P.~Gartside and Z.~Feng, \textit{Spaces $l$-dominated by $\mathbb I$ or $\mathbb R$}, Topology Appl. \textbf{219} (2017), 1--8.

\bibitem{gkm}
R.~Gorak, M.~Krupski and W.~Marciszewski, \textit{On uniformly continuous maps between function spaces}, Fund. Math. \textbf{246} (2019), 257--274. 

\bibitem{gu}
S.~Gul'ko, \textit{On uniform homeomorphisms of spaces of continuous functions}, Trudy Mat. Inst. Steklov, \textbf{193} (1992), 82--88 (in Russian):
 English translation: Proc. Steklov Inst. Math. \textbf{193} (1992), 87--93. 


\bibitem{KL2} J. K\c akol and A. Leiderman,
 \textit{Basic properties of $X$ for which the space $C_p(X)$ is distinguished},
 Proc. Amer. Math. Soc., series B,  \textbf{8} (2021), 267--280.

\bibitem{KKL} J. K\c akol, O. Kurka and A. Leiderman,  
 \textit{Some classes of topological spaces extending the class of $\Delta$-spaces},
 Proc. Amer. Math. Soc. \textbf{152} (2024), 883--899.

\bibitem{KawL}
K.~Kawamura and A.~Leiderman, \textit{Linear continuous surjections of $C_p$-spaces over compacta},
 Topology Appl. \textbf{227} (2017), 135--145. 

\bibitem{k}
M.~Krupski, \textit{On $\kappa$-pseudocompactess and uniform homeomorphisms of function spaces}, Results Math. \textbf{78} (2023), 
no. 4, Paper No. 154, 11 pp.
 
\bibitem{llp}
A.~Leiderman, M.~Levin and V.~Pestov, \textit{On linear continuous open surjections of the spaces $C_p(X)$},
 Topology Appl. \textbf{81} (1997), 269--279.

\bibitem{lmp}
A.~Leiderman, S.~Morris and V.~Pestov, \textit{The free abelian topological group and the free locally convex space on the unit interval},
 J. London Math. Soc. \textbf{56} (1997), 529--538.

\bibitem{Levin}
M.~Levin, {\em A property of $C_{p}[0,1]$,} Trans. Amer. Math. Soc.
  \textbf{363} (2011), 2295--2304.
	
\bibitem{mar2}
W.~Marciszewski, \textit{On properties of metrizable space preserved by $t$-equivalence},
 Mathematika \textbf{47} (2000), 273--279.	

\bibitem{mar}
W. Marciszewski, \textit{Function spaces}, 
in M. Hu\v sek and J. van Mill eds., Recent progress in general topology II, Elseiver (2002), 345--369.

\bibitem{mp}
W.~Marciszewski and J.~Pelant, \textit{Absolute Borel sets and function spaces},
 Trans. Amer. Math. Soc. \textbf{349} (1997), 3585--3596.

\bibitem{Michael}
E. Michael, \textit{A note on completely metrizable spaces}, 
Proc. Amer. Math. Soc. \textbf{96} (1986), 513--522.

\bibitem{vanMill} J. van Mill,
\textit{The Infinite-Dimensional Topology of Function Spaces}, North-Holland Mathematical Library 64, North-Holland, Amsterdam, 2001.

\bibitem{p} V.~Pestov,
 \textit{The coincidence of the dimension $\dim$ of $l$-equivalent topological spaces}, Soviet Math. Dokl.
 \textbf{26} (1982), 380--383. 

\bibitem{Plewik} S. Plewik and M. Walczy\' nska,
 \textit{On metric $\sigma$-discrete spaces},
Algebra, logic and number theory, Banach Center Publ. 108,
 Institute of Math., Warsaw, 2016, 239--253.

\bibitem{Solomon} R. S. Solomon,
\textit{A scattered space that is not zero-dimensional},
Bull. London Math. Soc. \textbf{8} (1976), 239--240.

\bibitem{Stone} A. H. Stone,
\textit{Kernel constructions and Borel sets},
Trans. Amer. Math. Soc. \textbf{107} (1963), 58--70.

\bibitem{Tel} R. Telg\' arsky,
\textit{Total paracompactness and paracompact dispersed spaces},
Bull. Pol. Acad. Sci. Math. \textbf{16} (1968), 567--572.

\bibitem{tk}
V.~Tkachuk, \textit{$C_p$-theory problem book. Topological and function spaces. Problem Books in Mathematics}, Springer, Berlin, New York, 2011.

\bibitem{tk2}
V.~Tkachuk, \textit{$C_p$-theory problem book. Functional equivalencies. Problem Books in Mathematics}, Springer, Berlin, New York, 2016.

\end{thebibliography}
\end{document}